\documentclass[a4paper, 11 pt, twoside]{amsart}
\usepackage[left = 3.5cm, right = 3.5cm, headsep = 6mm,
footskip = 10mm, top = 35mm, bottom = 35mm, footnotesep=5mm, headheight =
2cm]{geometry}
\usepackage{graphicx}
\usepackage[expansion=false]{microtype}
\usepackage{booktabs, multirow,  tabularx}

\usepackage{MnSymbol}
\usepackage[usenames, dvipsnames]{xcolor}

\usepackage{tikz, tikz-cd}
\usepackage{enumitem}
\newlist{prooflist}{description}{1}
\setlist[prooflist]{font=\normalfont \itshape, labelindent = \parindent, leftmargin = 0pt}

\usepackage{verbatim}
% \usepackage[notcite, notref]{showkeys}

% Hyperlinks
\usepackage[unicode,bookmarks, pdftex, backref = page]{hyperref}
\hypersetup{colorlinks=true,citecolor=NavyBlue,linkcolor=NavyBlue,urlcolor=Orange, pdfpagemode=UseNone, breaklinks=true}

\newtheoremstyle{special-example}% name
  {}%      Space above
  {}%      Space below
  {}%         Body font
  {\parindent}%{\parindent}%         Indent amount (empty = no indent, \parindent = para indent)
  {\bfseries}% Thm head font
  {:}%        Punctuation after thm head
  { }%     Space after thm head: " " = normal interword space;
        %       \newline = linebreak
  {}%         Thm head spec (can be left empty, meaning `normal')
  \theoremstyle{special-example}

% \renewcommand*{\thefootnote}{\fnsymbol{footnote}}

% Special definitions
\renewcommand{\tilde}{\widetilde}

\newcommand{\pp}{\mathbb P}

\newcommand{\Diff}{\mathrm{Diff}}

\newcommand{\I}{\mathrm{i}}

\newcommand{\HT}[1]{\lozenge_{#1}}

\newtheoremstyle{Lehn-it}% name
  {}%      Space above
  {}%      Space below
  {\itshape}%         Body font
  {}%         Indent amount (empty = no indent, \parindent = para indent)
  {\bfseries}% Thm head font
  {$\;$\textmd{---}}%        Punctuation after thm head
  { }%     Space after thm head: " " = normal interword space;
        %       \newline = linebreak
  {}%         Thm head spec (can be left empty, meaning `normal')

\newtheoremstyle{Lehn-up}% name
  {}%      Space above
  {}%      Space below
  {\upshape}%         Body font
  {}%         Indent amount (empty = no indent, \parindent = para indent)
  {\bfseries}% Thm head font
  {$\;$\textmd{---}}%        Punctuation after thm head
  { }%     Space after thm head: " " = normal interword space;
        %       \newline = linebreak
  {}%         Thm head spec (can be left empty, meaning `normal')

  \newtheoremstyle{up-list}% name
  {}%      Space above
  {}%      Space below
  {\upshape}%         Body font
  {}%         Indent amount (empty = no indent, \parindent = para indent)
  {\bfseries}% Thm head font
  { }%        Punctuation after thm head
  { }%     Space after thm head: " " = normal interword space;
        %       \newline = linebreak
  {}%         Thm head spec (can be left empty, meaning `normal')

\newtheoremstyle{Lehn-Bemerkung}% name
  {}%      Space above
  {}%      Space below
  {}%         Body font
  {}%{\parindent}%         Indent amount (empty = no indent, \parindent = para indent)
  {\itshape}% Thm head font
  {$\;$\textmd{---}}%        Punctuation after thm head
  { }%     Space after thm head: " " = normal interword space;
        %       \newline = linebreak
  {}%         Thm head spec (can be left empty, meaning `normal')

\newtheoremstyle{citing}% name
  {}%      Space above, empty = `usual value'
  {}%      Space below
  {\itshape}% Body font
  {}%         Indent amount (empty = no indent, \parindent = para indent)
  {\bfseries}% Thm head font
  {$\;$\textmd{---}}%        Punctuation after thm head
  {.5em}%     Space after thm head: " " = normal interword space;
        %       \newline = linebreak
  {\thmnote{#3}}% Thm head spec
  
% 
% % Matheumgebungen werden in langem Text nach Abschnitten nummeriert
% {\theoremstyle{Lehn-it}
% \newtheorem{theo}{Theorem}[section]
% \newtheorem{thm}[theo]{Theorem}
% \newtheorem{lem}[theo]{Lemma}
% %\newtheorem{satz}[theo]{Satz}
% \newtheorem{prop}[theo]{Proposition}
% \newtheorem{cor}[theo]{Corollary}
% \newtheorem{conj}[theo]{Conjecture}
% }
% {\theoremstyle{Lehn-up}
% \newtheorem{defin}[theo]{Definition}
% \newtheorem{exam}[theo]{Example}
% \newtheorem{quest}[theo]{Question}
% }
% {\theoremstyle{Lehn-Bemerkung}
% \newtheorem{rem}[theo]{Remark}
% }
% 
% 
% {\theoremstyle{up-list}
% \newtheorem{List}[theo]{List}
% }

\numberwithin{equation}{section}

% Matheumgebungen werden in langem Text nach Abschnitten nummeriert
{\theoremstyle{Lehn-it}
\newtheorem{theo}[equation]{Theorem}

\newtheorem{prop}[equation]{Proposition}

}
{\theoremstyle{Lehn-up}
\newtheorem{defin}[equation]{Definition}

}
{\theoremstyle{Lehn-Bemerkung}
\newtheorem{rem}[equation]{Remark}
}

{\theoremstyle{up-list}

}

% Umgebung fuer Zitate (ohne Nummer)
{\theoremstyle{citing}
}

%%%%%%%%%%%%%%%%%%%%%%%%%%%%%%%%%%%%%%%%%%%%%%%%%%%%%%%%%%%%%%%%%%%%%%%%%%%%%
% Layoutvariablen
%%%%%%%%%%%%%%%%%%%%%%%%%%%%%%%%%%%%%%%%%%%%%%%%%%%%%%%%%%%%%%%%%%%%%%%%%%%%%

% Aufzaehlung mit kleinen roemischen Buchstaben!

% Hervorheben von Definitionen

%%%%%%%%%%%%%%%%%%%%%%%%%%%%%%%%%%%%%%%%%%%%%%%%%%%%%%%%%%%%%%%%%%%%%%%%%%%%%
% Abkuerzungen fuer mathematische Symbole und anderes
%%%%%%%%%%%%%%%%%%%%%%%%%%%%%%%%%%%%%%%%%%%%%%%%%%%%%%%%%%%%%%%%%%%%%%%%%%%%%

%Zitierabkuerzungen

% special reference commands

% Abk�rzungen f�r Umgebungen                  %

% Pfeile

%\newcommand{\wt}{\widetilde}

%Operatoren und Funktoren

\DeclareFontFamily{OT1}{rsfs}{}
\DeclareFontShape{OT1}{rsfs}{n}{it}{<-> rsfs10}{}
\DeclareMathAlphabet{\curly}{OT1}{rsfs}{n}{it}

\DeclareMathOperator{\Proj}{\mathrm{Proj}}

\setlength{\marginparwidth}{70pt}

% Symbole
% \renewcommand{\bar}[1]{\overline{#1}}
\newcommand{\isom}{\cong}

\newcommand{\inverse}[1]{{#1}^{-1}}

\renewcommand{\epsilon}{\varepsilon}
\renewcommand{\phi}{\varphi}
\renewcommand{\theta}{\vartheta}

%%%%%%%%%%%%%%%%%%%%%%%%%%%%%%%%%%%%%%%%%%%%%%%%%%%%%%%%%%%%%%%%%%%%%%%%%%%%%%%
%
%  Spezielle Namen fuer gewisse Sonderalphabete
%
%%%%%%%%%%%%%%%%%%%%%%%%%%%%%%%%%%%%%%%%%%%%%%%%%%%%%%%%%%%%%%%%%%%%%%%%%%%%%%%

\newcommand{\kq}{{\mathcal Q}}

% Mathbb
\newcommand{\IA}{{\mathbb A}}

\newcommand{\IC}{{\mathbb C}}

\newcommand{\IP}{{\mathbb P}}
\newcommand{\IQ}{{\mathbb Q}}

\newcommand{\IZ}{{\mathbb Z}}
% Mathfrak

%

\newcommand{\gothM}{{\mathfrak M}}
\newcommand{\gothN}{{\mathfrak N}}

\newcommand{\gothX}{{\mathfrak X}}

\title[Hodge structures and I-surfaces]{Degeneration of  Hodge structures on I-surfaces}

\author[S. Coughlan]{Stephen Coughlan}
\address{Stephen Coughlan\\Mathematisches Institut \\Lehrstuhl Mathematik VIII \\ Universit\"atsstra\ss e 30 \\ 95447 Bayreuth
\\Germany}
\email{stephen.coughlan@uni-bayreuth.de}

\author{Marco Franciosi}
\address{Marco Franciosi\\Dipartimento di Matematica\\Universit\`a di Pisa \\Largo B. Pontecorvo 5\\I-56127  Pisa\\Italy}
\email{marco.franciosi@unipi.it}

\author{Rita Pardini}
\address{Rita Pardini\\Dipartimento di Matematica\\Universit\`a di Pisa \\Largo B. Pontecorvo 5\\I-56127  Pisa\\Italy}
\email{rita.pardini@unipi.it}

\author{S\"onke Rollenske}
\address{S\"onke Rollenske\\FB 12/Mathematik und Informatik\\
Philipps-Universit\"at Marburg\\
Hans-Meerwein-Str. 6\\
35032 Marburg\\
Germany}
\email{rollenske@mathematik.uni-marburg.de}

\begin{document}
\maketitle
\begin{abstract}
Work of Green, Griffiths, Laza, and Robles suggests that the moduli space of (smoothable) stable surfaces should admit a natural stratification defined via Hodge theoretic data. In the case of stable surfaces with $K_X^2 = 1$ and $\chi(X) = 3$ we compute the Hodge type of all examples known to us and show that all predicted degenerations are geometrically realised.
\end{abstract}

\tableofcontents

\section{Introduction}

The study of surfaces of general type and their moduli spaces constructed by Gieseker is a classical topic of investigation.
 In recent years, additional interest has arisen from the construction of a modular compactification, the moduli space of stable surfaces.

Already the Gieseker moduli space $\gothM_{K^2,\chi}$ can be arbitrarily singular and, in general, the compactification $\overline{\gothM}_{K^2,\chi}$ also does not have good geometric properties. However, in their investigation of compactifications of period domains, Green, Griffiths, Laza, and Robles suggested the existence of a stratification of the  closure of the Giesecker moduli space  determined by Hodge-theoretic data.
In its coarsest form, the Hodge type reflects the weights with respect to the mixed Hodge structure of the cohomology classes contributing to $p_g$. Several new features (and difficulties) occur for $p_g\geq 2$.
We will explain some of this in Section \ref{Hodge stratification}, but want to mention here that the final aim would be to use even more information from the Hodge structure to study the geometry of the moduli space, compare \cite{GGR21}.

One of the easiest test cases for these ideas are minimal surfaces with $K^2_X = 1$ and $
\chi(X) = 3$, baptised I-surfaces by Green et al. These had been studied by the three last named authors in \cite{FPR17a} as part of their investigation of Gorenstein stable surfaces with $K_X^2 = 1$.
The interaction of the Hodge theoretic and geometric approaches, also manifest at the 2021 Meeting ``{\em Moduli and Hodge Theory, IMSA Miami}''
 has kept our interest in this class of surfaces alive and led to the papers
\cite{FPRR22} and \cite{CFPRR}.

In the present paper, our purpose is modest: we want to exhibit a zoo of examples of I-surfaces, mostly non-normal, and show how relatively easy geometric constructions
 realise all degenerations of the Hodge type predicted by general theory (cf. \cite{Robles16}). As our understanding of the moduli space of I-surfaces is still limited, there is no way we can claim to give a complete discussion in any sense.

\subsection*{Acknowledgements} We thank Mark Green, Phillip Griffiths, Radu Laza and Colleen Robles for showing us the role of Hodge theory
 in analysing the boundary components of the moduli space of stable surfaces,  and, above all,    for interesting and stimulating discussions on these topics.
 
 \hfill\break 
 M.F.  and R.P. are  partially supported by the project PRIN 
 2017SSNZAW$\_$004 ``Moduli Theory and Birational Classification"  of Italian MIUR and members of GNSAGA of INDAM.

\section{Stable surfaces and Koll\'ar's glueing}\label{Kollar-glueing}
The (semi-log)-canonical  reference for the basic definitions and properties of stable surfaces is \cite{KollarSMMP}.

We simply recall that a stable surface $X$ has ample $\IQ$-Cartier canonical divisor $K_X$ and semi-log-canonical singularities.

If  $X$ is a stable normal surface and
 $(X, x)$ is  an isolated surface singularity,  then $(X,x)$ is either  a rational or  elliptic singularity.
 In the second case,
considering the minimal  resolution $\sigma \colon  (\bar{Y}, E) \to (X,x)$, $x$ is said to be  simple  elliptic if $E$ is smooth of genus $1$,
 and is said to be a cusp if $p_a(E)=1$ and $E$ is a cycle   of rational curves.  The degree of the singularity is $-E^2$.

% The non-normal Gorenstein slc singularities are normal-crossing points or pinch points or degenerate cusps (cf.  \cite{liu-rollenske12a}).

If   $X$ is a non-normal stable surface we consider its normalisation  $\pi\colon \bar X\to X$ and we recall that the non-normal locus $D\subset X$ and its preimage $\bar D\subset \bar X$ are pure of codimension $1$, i.\,e., they are  curves. Since $X$ has ordinary double points at the generic points of $D$,  the map on normalisations $\bar D^\nu\to D^\nu$ is the quotient by an involution $\tau$.
Koll\'ar's glueing principle says that $X$ can be uniquely reconstructed from $(\bar X, \bar D, \tau\colon \bar D^\nu\to \bar D^\nu)$ via the following two push-out squares:
\begin{equation}\label{diagr: pushout}
\begin{tikzcd}
    \bar X \dar{\pi}\rar[hookleftarrow]{\bar\iota} & \bar D\dar{\pi} & \bar D^\nu \lar[swap]{\bar\nu}\dar{/\tau}
    \\
X\rar[hookleftarrow]{\iota} &D &D^\nu\lar[swap]{\nu}
    \end{tikzcd}
\end{equation}
More precisely, we have the following.

\begin{theo}[{\cite[Thm.~5.13]{KollarSMMP}}]\label{thm: triple}
Associating to a stable surface $X$ the triple $(\bar X, \bar D, \tau\colon \bar D^\nu\to \bar D^\nu)$ induces a one-to-one correspondence
 \[
  \left\{ \text{\begin{minipage}{.12\textwidth}
 \begin{center}
         stable  surfaces
 \end{center}
         \end{minipage}}
 \right\} \leftrightarrow
 \left\{ (\bar X, \bar D, \tau)\left|\,\text{\begin{minipage}{.37\textwidth}
   $(\bar X, \bar D)$ log-canonical pair with
  $K_{\bar X}+\bar D$ ample, \\
   $\tau\colon \bar D^\nu\to \bar D^\nu$  involution s.th.\
    $\Diff_{\bar D^\nu}(0)$ is $\tau$-invariant.
            \end{minipage}}\right.
 \right\}.
 \]
 where $\Diff_{\bar D^\nu}(0)$ is the different (see \cite[5.11]{KollarSMMP} for the definition).
%  \textbf{\upshape Addendum:} In the above correspondence the surface $X$ is Gorenstein if and only if  $K_{\bar X}+\bar D$ is Cartier and
%  the involution $\tau\colon \bar D^\nu\to \bar D^\nu$
% induces a fixed-point free involution on the preimages of the nodes of $\bar D$.
\end{theo}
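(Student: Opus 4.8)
The plan is to construct two maps between the indicated sets and to prove they are mutually inverse. In one direction I send a stable surface $X$ to its normalisation data $(\bar X, \bar D, \tau)$: the normalisation $\pi\colon \bar X\to X$, the non-normal locus $D$ with preimage $\bar D$, and the involution $\tau$ on $\bar D^\nu$ realising $\bar D^\nu\to D^\nu$ as the quotient $\bar D^\nu/\tau$, exactly as recalled before the theorem. In the other direction I send a triple $(\bar X,\bar D,\tau)$ to the surface $X$ obtained as the amalgamated pushout in the diagram \eqref{diagr: pushout}. The content of the theorem is then that the first assignment lands among the admissible triples, that the second lands among the stable surfaces, and that the two round-trips are the identity.

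For the forward direction I would start from the fact that a stable surface is demi-normal, i.e.\ it is $S_2$ and has at worst ordinary double points in codimension one. Hence the conductor is reduced, the non-normal locus $D$ and its preimage $\bar D$ are pure of codimension one, and $\bar D^\nu\to D^\nu$ is a generically \'etale double cover with covering involution $\tau$. Since $\pi$ is finite and $\pi^*K_X=K_{\bar X}+\bar D$, ampleness and $\IQ$-Cartierness of $K_X$ give ampleness of $K_{\bar X}+\bar D$, while the definition of semi-log-canonical translates verbatim into the statement that $(\bar X,\bar D)$ is log canonical. Finally, $\IQ$-Cartierness of $K_X$ forces the different to be $\tau$-invariant: by adjunction $(K_{\bar X}+\bar D)|_{\bar D^\nu}=K_{\bar D^\nu}+\Diff_{\bar D^\nu}(0)$, and pulling $K_X$ back to $\bar D^\nu$ through the two branches identified by $\tau$ must yield the same divisor, which is precisely $\tau$-invariance.

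For the backward direction I must first show that the pushout $X$ exists as a scheme with $\pi\colon\bar X\to X$ finite and surjective; this I expect to be the main obstacle, and it rests on the gluing theorems for finite equivalence relations (Ferrand, Koll\'ar), the point being that $\bar\nu$ is finite and $\tau$ identifies the two preimages compatibly. Granting existence, $X$ is demi-normal because $\bar X$ is $S_2$ and one glues two reduced branches along the generically \'etale cover $\bar D^\nu\to D^\nu$, producing nodes in codimension one. The second delicate point is $\IQ$-Cartierness of $K_X$: the $\IQ$-line bundle $K_{\bar X}+\bar D$ descends along the finite map $\pi$ exactly when its restriction to $\bar D^\nu$ is $\tau$-compatible, and by the adjunction formula above this compatibility is the $\tau$-invariance of $\Diff_{\bar D^\nu}(0)$. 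Once $K_X$ is $\IQ$-Cartier, inversion of adjunction upgrades ``$(\bar X,\bar D)$ log canonical'' to ``$X$ semi-log-canonical'', and ampleness of $K_{\bar X}+\bar D$ together with finiteness of $\pi$ gives ampleness of $K_X$, so $X$ is stable.

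It remains to check that the two constructions are mutually inverse. Normalising a glued surface recovers $\bar X$ by the uniqueness of normalisation, and recovers $\bar D$ and $\tau$ as the conductor and its covering involution; conversely, gluing the normalisation data of a stable $X$ returns $X$ because the conductor square of a demi-normal surface is co-Cartesian, so $X$ satisfies the universal property characterising the pushout. The heart of the argument, and the step on which I would spend the most effort, is the equivalence between $\IQ$-Cartierness of $K_X$ and $\tau$-invariance of the different, together with the scheme-theoretic existence of the pushout.
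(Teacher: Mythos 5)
The paper does not prove this statement at all: it is quoted verbatim from Koll\'ar \cite[Thm.~5.13]{KollarSMMP}, so there is no in-paper argument to measure yours against. Your sketch does reproduce the architecture of Koll\'ar's actual proof --- two mutually inverse assignments, a forward direction that is essentially definition-chasing, and a backward direction whose substance is the existence of the quotient and the descent of the log canonical divisor --- and you correctly locate the two hard steps.

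Those two steps, however, are exactly the ones you leave as black boxes, and the first is misdiagnosed. The existence of $X$ does not follow from ``$\bar\nu$ is finite and $\tau$ identifies the two preimages compatibly'': what must be quotiented out is the equivalence relation \emph{generated} by $(\bar\iota\circ\bar\nu,\ \bar\iota\circ\bar\nu\circ\tau)\colon \bar D^\nu\to\bar X\times\bar X$, and the content of the theorem is that this generated relation is again a finite equivalence relation admitting a geometric quotient. For surfaces the set-theoretic finiteness of orbits is manageable (new points can only accumulate over the finite set $\bar\nu^{-1}(\sing{\bar D})$ and its $\tau$-translates), but the representability of the quotient as a scheme, its $S_2$-ness, and above all the descent of some multiple $m(K_{\bar X}+\bar D)$ to a Cartier divisor on $X$ are genuine theorems (Koll\'ar's Chapters 5 and 9), not formal consequences of adjunction; the $\tau$-invariance of $\Diff_{\bar D^\nu}(0)$ is a necessary condition, as you observe, but showing it is sufficient for $\IQ$-Cartier descent is the real work. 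A smaller point: semi-log-canonicity of the glued surface is not obtained by ``inversion of adjunction'' --- once $X$ is demi-normal with $K_X$ $\IQ$-Cartier, slc is \emph{defined} by the condition that $(\bar X,\bar D)$ be lc, so there is nothing to upgrade there. As it stands your text is a correct reduction of the statement to the gluing machinery of \cite{KollarSMMP}, rather than a proof of it.
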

%
% \sr{The following could be shortend if we want to.}
%
% Let $X$ be a demi-normal surface, that is,  $X$ satisfies $S_2$ and  at each point of codimension one $X$ is either regular or has an ordinary double point.
% We denote by  $\pi\colon \bar X \to X$ the normalisation of $X$. The conductor ideal
% $ \shom_{\ko_X}(\pi_*\ko_{\bar X}, \ko_X)$
% is an ideal sheaf  both in $\ko_X$ and $\ko_{\bar X} $ and as such defines subschemes
% $D\subset X \text{ and } \bar D\subset \bar X,$
% both reduced and of pure codimension 1; we often refer to $D$ as the non-normal locus of $X$.
%
%
% \begin{defin}\label{defin: slc}
% The  demi-normal surface $X$ is said to have \emph{semi-log-canonical (slc)}  singularities if it satisfies the following conditions:
% \begin{enumerate}
%  \item the canonical divisor $K_X$ is $\IQ$-Cartier;
% \item the pair $(\bar X, \bar D)$ has log-canonical (lc) singularities.
% \end{enumerate}
% It  is called a stable  surface
%  if in addition $K_X$ is ample. In that case we define the geometric genus of $X$ to be $ p_g(X) = h^0(X, \omega_X) = h^2(X, \ko_X)$ and the irregularity as $q(X) = h^1(X, \omega_X) = h^1(X, \ko_X)$.
% A Gorenstein stable surface is a stable surface such that $K_X$ is a Cartier divisor.
% \end{defin}
%
% Using the notion of different explained in \cite[???]{KollarSMMP}, Koll\'ar gives a precise characterisation of  normalisations of stable surfaces.

\subsection{I-surfaces}\label{ssection: I-surf}

An \emph{I-surface}  $X$  is a stable surface with   $K_X^2=1$ and $\chi(X)=3$. If $X$ is  smooth this is equivalent to saying
 that  $X$ is a minimal surface of general type with $K_X^2=1$, $p_g(X)=2$ and $q(X)=0$.

By Theorem \ref{thm: triple}  a triple $(\bar X, \bar D, \tau)$ corresponds to an I-surface  if and only if the following four conditions are satisfied:

\begin{description}
\item[Stable pair condition]  $(\bar X, \bar D)$ is an lc pair such that $K_{\bar X}+\bar D$ is an ample $\IQ$-Cartier divisor.
\item[$K_X^2$-condition] $(K_{\bar X}+\bar D)^2=1$.
 \item[Glueing condition] $\tau\colon \bar D^\nu\to \bar D^\nu$ is an involution such that $\Diff_{\bar D^\nu}(0)$ is $\tau$-invariant.
 \item[$\chi$-condition]  The holomorphic Euler-characteristic of the non-normal locus $D$ is  $\chi(D) = 3-\chi(\bar X)+\chi(\bar D)$.
\end{description}

In \cite{FPR17a} it was shown  that the classical description of smooth surfaces of general type with
$K_X^2=1$ and $\chi(X)=3$  extends to  the Gorenstein case, i.e.:

\begin{itemize}
\item  a Gorenstein I-surface $X$  is canonically embedded as a hypersurface of degree $10$ in (the smooth locus of) $\IP(1,1,2,5)$;

 \item the moduli space
$\overline\gothM_{1,3}^{(Gor)}$ of Gorenstein stable surfaces with $K^2=1$ and $\chi=3$ is irreducible and rational of dimension $28$.
\item  for a Gorenstein I-surface $X$, the bicanonical map is a degree 2 morphism $\phi_2\colon X\to \kq_2 \subset \IP^3$, where $\kq_2$  is  the quadric cone, branched on the vertex $o$   and on a quintic section $B$ of $\kq_2$ not containing $o$.
\item  conversely, if $B$ is a quintic section of $\kq_2$ such that  $(\kq_2, \frac 12 B)$ is a log-canonical pair,
then the double cover of $\kq_2$ branched on $B$ and  $o$ is a stable  I-surface;  it is  Gorenstein if $B$ does  not contain $o$, else it is 2-Gorenstein.
\end{itemize}

\section{Stratification of $\bar \gothM$ from Hodge structures}\label{Hodge stratification}

\subsection{The mixed Hodge structure on $H^2(X)$}\label{subsection:hodge}
Here we follow roughly the exposition of \cite{anthes20}.

The stratification we will define and study later is motivated by work (partially in progress) of Green, Griffiths, Kerr, Laza and Robles \cite{GGR21, KPR,Robles16a,Robles16} about degenerations of Hodge structures.
Roughly, there should be a stratification of the moduli space of the surfaces under investigation, according to the type of polarised mixed Hodge structure on $H^2(X)$.
It should be noted that the details about this Hodge-theoretic stratification are not fully understood yet. 
Therefore, we can only give an informal description.
We refer to Robles' exposition \cite{Robles16} and the references therein for more details; for the basic theory of mixed Hodge structures see Durfee's short introduction \cite{Durfee:1983} and the comprehensive account by Peters and Steenbrink \cite{Peters-Steenbrink}.

%Given a flat degeneration $\gothX\to S$, $\gothX_{s}$ smooth projective, we can associate a \emph{limiting polarised mixed Hodge structure} with the family of Hodge structures $H^2(\gothX_{s};\IC)$
%(see \cite{GGR21}).\st{probably don't need to define the family twice} \mf{you are right}
%The Deligne splitting gives an $\IR$-split polarised mixed Hodge structure. \rp{I'm confused: is this polarised MHS the same as the one in the previous sentence?}
%Furthermore, representation theory gives rise to a relation among these Hodge diamonds,\rp{I'm even more confused: the Hodge diamonds we are talking about correspond to different MHS for the same surface or to different degenerations} called the \emph{polarised relations}.\rp{are these described in figure 1?}
%They reflect which Hodge structures are more degenerate than others.

Given a flat degeneration   $\gothX\to\Delta$, where $\Delta\subset \IC$ is the unit disc and where all fibres $\gothX_{s}$, $s\in\Delta^* = \Delta-0$
 are smooth  and projective, we can can associate a \emph{limiting polarised mixed Hodge structure} (LMHS for short) with the family of Hodge structures $H^2(\gothX_{s};\IC)$
(see \cite{GGR21}). Moreover,  we can compare this LMHS  with 
 Deligne's natural mixed Hodge structure on the cohomology of the special fibre $H^n(\gothX_{0};\IC)$  via the specialisation map.
In general, the  two mixed Hodge structures are related by the Clemens--Schmid exact sequence (cf. \cite{Cle77}) and its generalizations (cf. \cite{KL19, KL20}).

For our application we consider the case where $X= \gothX_{0}$ is a stable surface. In this case
 we consider  Deligne's mixed Hodge structure on $X$, since
$X$ is   \emph{cohomologically  insignificant}. That is,
 for every $n$, the Deligne mixed Hodge structure on $H^n(\gothX_0;\IC)$ and the limiting mixed Hodge structure on $H^{n}(\gothX_s;\IC)$ agree on $(p,q)$-components where $pq = 0$.
  This  follows since
a projective variety with at worst du Bois singularities is cohomologically insignificant by
a result of Steenbrink \cite[Theorem~2]{Steenbrink:1980}, and
 semi-log-canonical surfaces are Du Bois  (cf.\ Koll\'{a}r \cite[Corollary~6.32]{KollarSMMP} or Kov\'{a}cs, Schwede, Smith \cite[Theorem~4.16]{kss10}).
See also  \cite[Thm 9.3 and Cor. 9.9]{KL19} for a generalisation of the Clemens--Schmid exact sequence and applications to slc varieties.
%For our purposes, it is therefore enough to work with Deligne's mixed Hodge structure.

Now we focus on \emph{I-surfaces}  (see \S \ref{ssection: I-surf}).  Assume that $X$ is smoothable.
Since $h^{2,0}(X) = h^{0,2}(X) = p_{g}(X) = 2$ for a smooth minimal surface $X$ of general type satisfying $K_{X}^2 = 1$ and $\chi(X) = 3$, our case of interest corresponds to the Hodge numbers $h = (2,h^{1,1},2)$.
That is, the Deligne splitting $H^2(X;\IC) = \bigoplus_{p,q} H^2(X;\IC)^{(p,q)}$ 
 of the limiting mixed Hodge structure  has a Hodge diamond (indicating $\dim H^2(X;\IC)^{(p,q)} $) of the form
\begin{center}
\begin{tikzpicture}[every circle/.style={radius = 0.05}]
\draw [->] (0,0) -- (2.25,0);
\draw [<-] (0,2.25) -- (0,0);
\node [above] at (0,2.25) {$\scriptstyle{p}$};
\node [right] at (2.25,0) {$\scriptstyle{q}$};

\draw [fill] (0,2) circle;
\draw [fill] (0,1) circle;
\draw [fill] (0,0) circle;
\draw [fill] (1,2) circle;
\draw [fill] (1,1) circle;
\draw [fill] (1,0) circle;
\draw [fill] (2,2) circle;
\draw [fill] (2,1) circle;
\draw [fill] (2,0) circle;

\node [below left] at (0,0) {$\scriptstyle{r}$};
\node [above right] at (2,2) {$\scriptstyle{r}$};
\node [left] at (0,1) {$\scriptstyle{s}$};
\node [below] at (1,0) {$\scriptstyle{s}$};
\node [right] at (2,1) {$\scriptstyle{s}$};
\node [above] at (1,2) {$\scriptstyle{s}$};
\node [left] at (0,2) {$\scriptstyle{2-r-s}$};
\node [below] at (2,0) {$\scriptstyle{2-r-s}$};
\node [above] at (1,1) {$\scriptstyle{h^{1,1}-r-2s}$};
\end{tikzpicture}
\end{center}
where $r,s\geq 0$, $r+s\leq 2$ and $r+2s\leq h^{1,1}$.

\begin{defin}\label{def:Hodge type}
Let $X$ be an I-surface (see \S \ref{ssection: I-surf}).
Then $X$ is said to be of \emph{Hodge type $\HT{r,s}$} if $r = \dim H^2(X;\IC)^{(0,0)}$ and $s = \dim H^2(X;\IC)^{(1,0)} $, where $H^2(X;\IC)^{(p,q)}$ is the $(p,q)$-component of Deligne's mixed Hodge structure on $H^2(X;\IC)$.
\end{defin}

%With this diamond denoted by $\HT{r,s}$,

Among these Hodge types there is the so-called  {\em polarised relation},  defined by  $\HT{r,s}\leq \HT{t,u}$ if and only if $r\leq t$ and $r+s\leq t+u$.
This relation comes from representation theory and has a geometric meaning which follows  by the analysis  of the extension of period domain  and the period map
(cf.\ Robles \cite{Robles16}).
Roughly speaking $\HT{r,s}\leq \HT{t,u}$ means that the Hodge type $ \HT{t,u}$ is  ``more degenerate'' than  $\HT{r,s}$.

In our case  the polarised  relations among Hodge types for an I-surface are
illustrated in the Degeneration Diagram in Figure \ref{fig:hodge-diagram-diagram} (cf.  \cite[Example 4.10]{Robles16} for a detailed explanation).

 Smooth varieties have Hodge structures of pure weight, that is, they are of type $\HT{0,0}$.
Our purpose is to show that we can realise in several different ways  all the polarised relations illustrated in Figure  \ref{fig:hodge-diagram-diagram}
 by  considering the stratification   given by the singularities of  I-surfaces.
We will ignore $h^{1,1}$ just as  we will ignore canonical surface singularities.

\begin{figure}\centering
\begin{tikzpicture}
 \node (00)
 {$\HT{0,0}$};
 \node[right of=00,node distance=1.5cm] (01)
 {$\HT{0,1}$};
 \node[below right of=01,node distance=1.5cm] (02)
 {$\HT{0,2}$};
 \node[above right of=01,node distance=1.5cm] (10)
 {$\HT{1,0}$};
 \node[below right of=10,node distance=1.5cm] (11)
 {$\HT{1,1}$};
 \node[right of=11,node distance=1.5cm] (20)
{$\HT{2,0}$};

 \path (00) edge (01);
 \path (01) edge (10)
 			edge (02);
 \path (10) edge (11);
 \path (02) edge (11);
 \path (11) edge (20);
\end{tikzpicture}
\caption{Degeneration diagram for the Hodge types.}\label{fig:hodge-diagram-diagram}
\end{figure}
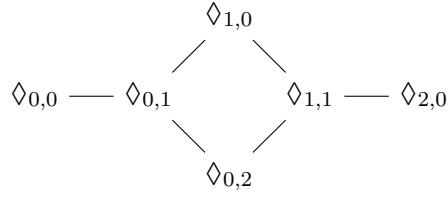

\subsection{The Mayer--Vietoris exact sequence}

Let us provide some information on how to compute Deligne's mixed Hodge structure on the second cohomology  and more specifically the Hodge type in practice.

\begin{prop}
There are Mayer--Vietoris type exact sequences of mixed Hodge structure in the following situations:
\begin{enumerate}
  \item  Let $ X$ be a demi-normal surface,   $\pi\colon \bar X\to X$ its normalisation,  $D\subset X$ the non-normal locus and $\bar D\subset \bar X$ its preimage  (see \S \ref{Kollar-glueing}).
  Then the  commutative diagram associate to the normalisation map
  \begin{equation*}
  \begin{tikzcd}
    \bar X \dar{\pi}\rar[hookleftarrow]{\bar\iota} & \bar D\dar{\pi|_{\bar D}}
    \\
    X\rar[hookleftarrow]{\iota} &D
  \end{tikzcd}
\end{equation*}
induces an exact sequences of mixed Hodge structures
\begin{equation}\label{eq: MHS normalisation}
\begin{tikzcd}
&H^1(X, \IC) \rar { (\pi^* , \iota^*)} & H^1(\bar X, \IC) \oplus H^1(D, \IC) \rar{ \bar\iota^*- \pi|_D^*}&
H^1(\bar D, \IC) \rar &{}\\
{}\rar&H^2(X, \IC) \rar { (\pi^* , \iota^*)} & H^2(\bar X, \IC) \oplus H^2(D, \IC) \rar{ \bar\iota^*- \pi|_D^*}& H^2(\bar D, \IC) \rar & \dots
\end{tikzcd}
\end{equation}
\item Let $\bar X$ be a normal surface and $\sigma \colon \bar Y \to \bar X$ a resolution. Let $S = \{p_1, \dots, p_k\}$ be the singular points of $X$ and $E_i$ the exceptional curve over $p_i$. Then there is an exact sequence
\begin{equation}\label{eq: MHS resolution}
\begin{tikzcd}
&H^1(\bar X, \IC) \rar { \sigma^*} & H^1(\bar Y, \IC)  \rar &
\bigoplus_i H^1(E_i, \IC) \rar & {}\\
{}\rar&H^2(\bar X, \IC) \rar { \sigma^*} & H^2(\bar Y, \IC) \rar &  \bigoplus_i H^2(E_i, \IC) \rar & {}
\end{tikzcd}
\end{equation}
\item
Let $C$ be a curve with singular points $p_1, \dots, p_r$ and $\nu \colon C^\nu = \bigsqcup_i C^\nu_i \to C$ the normalisation.
%\rp{does one need to assume that the singularities are nodes?}\sr{No, this is just topology (think everything triangulated with the singularities being vertices.)} \sr{we could use reduced cohomology to make the sequence look nicer.}
Then there is an exact sequence
\begin{equation}\label{eq: MHS normalisation curve}
\begin{tikzcd}
0 \rar &H^0(C, \IC) \rar  & H^0(C^\nu, \IC)\oplus \bigoplus_i H^0(p_i, \IC)   \rar &
\bigoplus_i H^0(\inverse\nu(p_i), \IC) \rar &{}\\
{}\rar&H^1(C, \IC) \rar { \nu^*} & \bigoplus_i H^1(C^\nu_i, \IC) \rar &  0
\end{tikzcd}
\end{equation}

\end{enumerate}

\end{prop}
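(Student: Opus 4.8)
The plan is to recognise all three sequences as instances of a single \emph{Mayer--Vietoris sequence for an abstract blow-up square}. By this I mean a Cartesian square built from a proper surjective morphism $f\colon \bar X\to X$ of projective varieties, a closed subvariety $Z\subset X$ with preimage $\bar Z=f^{-1}(Z)$, such that $f$ restricts to an isomorphism $\bar X\setminus\bar Z\xrightarrow{\sim}X\setminus Z$. For any such square I claim there is a long exact sequence of mixed Hodge structures
\[
\cdots\to H^n(X)\to H^n(\bar X)\oplus H^n(Z)\to H^n(\bar Z)\to H^{n+1}(X)\to\cdots\qquad(\star)
\]
with the first map $(f^*,\iota^*)$ and the second map the difference of restrictions. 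Granting $(\star)$, the three assertions follow by choosing $f$, $Z$ and discarding vanishing groups. For \refenum{i}, $f=\pi$ is the (finite, hence proper) normalisation and $Z=D$ the non-normal locus, so $(\star)$ \emph{is} \eqref{eq: MHS normalisation}. For \refenum{ii}, $f=\sigma$ is the resolution and $Z=S$ the finite singular set; since $\dim S=0$ we have $H^n(S)=0$ for $n\ge1$, and $(\star)$ collapses to \eqref{eq: MHS resolution} in degrees $n\ge1$ (if one wants exactness at the initial $H^1(\bar X)$ as well, it is supplied by connectedness of the fibres $E_i$ over a normal point, which makes $H^0(\bar Y)\oplus H^0(S)\to\bigoplus_i H^0(E_i)$ onto). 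For \refenum{iii}, $f=\nu$ is the normalisation of the curve and $Z=\{p_1,\dots,p_r\}$; here $H^{\ge1}$ of the $0$-dimensional loci $Z$ and $\nu^{-1}(Z)$ vanishes, so $(\star)$ truncates to \eqref{eq: MHS normalisation curve}, the final surjectivity of $\nu^*$ being exactness of $(\star)$ at $H^1(C^\nu)=\bigoplus_i H^1(C^\nu_i)$.

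To prove $(\star)$ I would compare the long exact sequences of the pairs $(X,Z)$ and $(\bar X,\bar Z)$. Writing $U=X\setminus Z$ and $\bar U=\bar X\setminus\bar Z$, properness of $f$ together with the isomorphism $\bar U\xrightarrow{\sim}U$ gives, for the projective varieties at hand, a chain of isomorphisms $H^n(X,Z)\cong H^n_c(U)\cong H^n_c(\bar U)\cong H^n(\bar X,\bar Z)$; in other words excision (equivalently proper base change) shows $f^*\colon H^n(X,Z)\to H^n(\bar X,\bar Z)$ is an isomorphism for every $n$. Splicing the two long exact sequences of pairs along this isomorphism is the standard device that manufactures a Mayer--Vietoris sequence out of an isomorphism of relative cohomologies, and it produces precisely $(\star)$ on the level of $\IQ$-vector spaces (the displayed $\IC$-sequences are obtained by $\otimes_\IQ\IC$).

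The only genuinely delicate point, and the main obstacle, is to upgrade $(\star)$ to a sequence of \emph{mixed Hodge structures}. I would invoke Deligne's theory, which endows the cohomology of every complex variety --- and, through the mixed Hodge complex formalism, the relative groups $H^n(X,Z)$ and $H^n(\bar X,\bar Z)$ --- with functorial mixed Hodge structures for which the connecting and restriction maps in the pair sequences are morphisms; the comparison map $f^*$ is then a morphism of mixed Hodge structures which, being a topological isomorphism, is an isomorphism of mixed Hodge structures by strictness. Splicing morphisms and isomorphisms of mixed Hodge structures again yields morphisms, so $(\star)$ is a sequence of mixed Hodge structures. Concretely this amounts to realising the whole diagram by one commutative diagram of mixed Hodge complexes; more efficiently one may work in the derived category of mixed Hodge modules, where the square furnishes a distinguished triangle $\IQ_X^H\to Rf_*\IQ_{\bar X}^H\oplus \IQ_Z^H\to R(f|_{\bar Z})_*\IQ_{\bar Z}^H\xrightarrow{+1}$ whose hypercohomology is $(\star)$. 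In the finite cases \refenum{i} and \refenum{iii} this can be done by hand, since $R\pi_*=\pi_*$ and the triangle degenerates to a short exact sequence of sheaves $0\to\IQ_X\to\pi_*\IQ_{\bar X}\oplus\iota_*\IQ_D\to(\pi|_{\bar D})_*\IQ_{\bar D}\to 0$ underlying a short exact sequence of mixed Hodge complexes, whose associated long exact hypercohomology sequence is $(\star)$.
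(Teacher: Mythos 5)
Your proof is correct and takes essentially the same route as the paper: the general sequence $(\star)$ for an abstract blow-up (discriminant) square is precisely the content of \cite[Corollary-Definition 5.37]{Peters-Steenbrink}, which is all the paper's proof invokes, and your three specialisations (including the vanishing of higher cohomology of the zero-dimensional loci and the connectedness argument at $H^0$) match the intended reading. The only difference is that you additionally sketch a proof of $(\star)$ itself, via the isomorphism of relative cohomologies and strictness of morphisms of mixed Hodge structures, which the paper simply delegates to the reference.
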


\begin{proof} The Proposition
 follows from  \cite[Corollary-Definition 5.37]{Peters-Steenbrink}. Concerning  {\em (iii)},  a detailed exposition in the case where  $C$ has only nodes  is given in \cite[\S 4]{Durfee:1983}.  
\end{proof}

In particular for a stable surface $X$,  if $(X,p)$  is  a simple elliptic singularity with
 minimal  resolution $\sigma \colon  (\bar{Y}, E) \to (X,p)$,
 then $E$ is a smooth  elliptic curve, hence $H^1(E, \IC)$ carries a pure Hodge structure of weight $1$. If
$p$ is a cusp then $E$ is a cycle of rational curves, hence $H^1(E, \IC)$ carries a mixed Hodge structure concentrated in weight $0$.
 The restriction maps in the  Mayer--Vietoris exact sequences described above  can be used  to compute the mixed Hodge structure on $X$
 (cf.  \cite[\S 2]{anthes20}).

\begin{rem}\label{rem: ideas}
We collect some slogans about what determines the mixed  Hodge type on the second cohomology, that follow from the above exact sequences:
\begin{itemize}
\item Smooth varieties have Hodge structures of pure weight.
 \item  Rational singularities do not affect the Hodge type of a stable surface. In particular,  a surface $X$ with only rational singularities has Hodge structure of pure weight.
 \item Isolated irrational  singularities may introduce the HS of the exceptional curve.
 \item Non-normal glueings may introduce the HS of the non-normal locus.
 \item On a curve, the weight filtration on the first cohomology captures the genus of the normalisation.
\end{itemize}
\end{rem}

\begin{rem}
 We want to emphasise again that the Hodge type $\HT{r,s}$ is a numerical datum, which forgets a lot of information contained in the mixed Hodge structure.
  In particular, both the integral structure as well as the extension data of the graded pieces should capture much more of the geometry of the surface, including information about the rational singularities.
   We refer to \cite[\S 5.2]{GGR21} for the period mapping and a detailed description of the extension data for I-surfaces.
\end{rem}

\section{Examples of degenerations in  I-surfaces}

\subsection{Double covers of the cone}\label{sect: cone 1}

Smooth I-surfaces can be realised as double covers of the quadric cone $\kq_2 \subset \IP^3$, branched over  on the vertex $o$   and on a smooth quintic section $B$ of $\kq_2$ not containing $o.$
%Gorenstein I-surfaces can be realised in the same way,  with $B$ possibly singular.
Therefore we can obtain degenerations of a smooth I-surface by letting $B$ acquire suitable singularities (see \S \ref{ssection: I-surf}).

 Since   the quadric cone can be seen as the image of the second Veronese map of $ \IP(1,1,2)$,
we  identify $ \kq_2$ with  $ \IP(1,1,2)$  and
we   describe  I-surfaces that arise as double covers of the quadric cone in $\IP^3$  as a hypersurfaces $X$  of degree ten in $\IP(1,1,2,5)$ given by an equation
\[z^ 2 - f_{10}(x_0, x_1, y) = 0.  \]

Denote by $B = \{f_{10} = 0 \} \subset \IP(1,1,2)$ the branch locus of the double cover.
Then the following hold:
\begin{itemize}
 \item $X$ has slc singularities if and only if $(\IP(1,1,2), \frac 12 B)$ is an lc pair.
 \item $X$ is non-normal if and only if $f$ has a multiple factor.
 \item $X$ is Gorenstein if and only if the branch curve $B$ does not pass through the vertex, that is, the singular point  $(0:0:1)$ of $\IP(1,1,2)$.
 \end{itemize}

 Let us quickly review what kind of singularities we can allow on the branch curve $B $ in a smooth point of $\IP(1,1,2)$ so that $X$ is slc.

 If  $p\in B$  is an isolated singularity of $B$, then one of the following occurs:

\begin{itemize}
\item $p$ is a {\em negligible} singularity:  $p$ is a double point or a triple point such that every point infinitely near to $p$  is at most double for the strict transform of $B$.
The preimage of $P$ in $X$ is a canonical singularity.

\item $p$ is a quadruple point such that for every point $q$ infinitely near to $p$  the local intersection number at $q$  of the strict transform of $B$ and the exceptional curve is at most 2.
The preimage of $p$ in $X$ is an elliptic Gorenstein singularity of degree $2$.
The exceptional curve in the minimal resolution is a smooth elliptic curve if and only if the quadruple point is ordinary, else it is a cycle of rational curves.
\item $p$ is a $[3,3]$-point, namely $p$ is a triple point with an infinitely near triple point $p_1$ such that  for every point $q$ infinitely near to $p_1$  the local intersection number at $q$
of the strict transform of $B$ and the exceptional curve is at most 2. The preimage of $p$ in $X$ is an elliptic Gorenstein singularity of degree $1$.
The exceptional curve in the minimal resolution is a smooth elliptic curve if and only if the infinitely near triple point is ordinary, else it is a cycle of rational curves.
\end{itemize}

\subsubsection{Normal double covers of the cone}

Clearly,  a smooth I-surface has Hodge type $\HT{0,0}$ and we want to show quickly that all possible degenerations  of the  Hodge type already occur for normal surfaces. A different stratification by number and degree of elliptic points was given in \cite{FPR17a}. But since  cusps were not considered in loc.\ cit., we cannot  cite all desired degenerations directly from there. For the reader's convenience, we reproduce the resulting stratification together with the corresponding Hodge type in Figure \ref{fig: old stratification}. Some information on the general element in each stratum is given in Table \ref{tab: normal strata}, where
\[ \gothN_{d_1, \dots, d_k} = \left\{X\in \overline\gothM_{1,3}^{(Gor)}
\left|\,\text{\begin{minipage}{.45\textwidth}
   $X$ is normal and has exactly $k$ elliptic singularites of degree $d_1\leq \dots\leq  d_k$
            \end{minipage}}\right.
 \right\}.
\]

\begin{figure}
 \caption{Hodge type for the normal surfaces from  \cite{FPR17a}}\label{fig: old stratification}
\[
\begin{tikzcd}[arrows=dash, column sep = small]
 {}&&& \gothM_{1,3}
%  \arrow[bend right]{ddlll}
 \arrow{dl}\arrow{dr}
 %\arrow[bend left]{ddrrr}
 &&&& \HT{0,0}\dar[->]
\\
 && \gothN_1
%  \arrow{ddll}
 \arrow{dl}
 \dar\arrow{dr}
 & & \gothN_2\dar\arrow{dl} & & & \HT{0,1}\dar[->]
\\
%  {\mathfrak d}{\mathfrak P}\dar
 &\gothN_{1,1}^R & \gothN^E_{1,1}\dar\arrow{dr} & \gothN_{1,2}\dar & \gothN_{2,2}&&
%  {\mathfrak P}
  & \HT{0,2}\\
%  {\mathfrak E}
&&\gothN_{1,1,1} & \gothN_{1,1,2}
 \end{tikzcd}
\]
\end{figure}
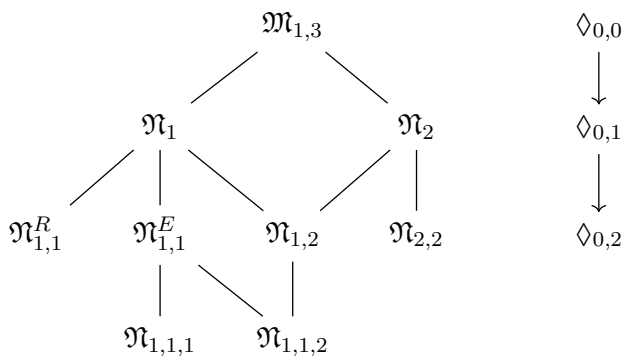

\begin{table}[h]\caption{Irreducible  strata of normal surfaces in $\overline\gothM_{1,3}^{(Gor)}$}\label{tab: normal strata}
 \begin{center}
 \begin{tabular}{cclc}
 \toprule
stratum & dimension & minimal resolution $\tilde X$ & $\kappa(\tilde X)$\\
\midrule
$\gothN_\varnothing=\gothM_{1,3}$& $28$ & general type & $2$\\
$\gothN_2$ & $20$  & blow up of a K3-surface & $0$\\
$\gothN_1$ & $19$  & minimal elliptic surface with $\chi(\tilde X)=2$ & $1$\\
$\gothN_{2,2}$ & $12$  & rational surface& $-\infty$\\
$\gothN_{1,2}$ & $11$ & rational surface& $-\infty$\\
$\gothN_{1,1}^R$ & $10$ &rational surface& $-\infty$\\
$\gothN_{1,1}^E$ & $10$ &blow up of an Enriques surface & $0$\\% at a double point of a (reducible) degree 2 polarisation\\
$\gothN_{1,1,2}$ & $2$ & ruled surface with $\chi(\tilde X)=0$& $-\infty$\\
$\gothN_{1,1,1}$ & $1$  & ruled surface with $\chi(\tilde X)=0$& $-\infty$\\
\bottomrule
\end{tabular}
 \end{center}
\end{table}

Now we explain one possibility to realise everything in Figure
\ref{fig:hodge-diagram-diagram}: fix two general points $p_1, p_2$ on quadric the cone and take three general planes $H_3, H_4, H_5$ containing both points. Now consider the family of all pairs of planes $H_1, H_2$ such that if one takes $
B =
\sum_i H_i$ as branch divisor then the resulting surface is a normal stable surface $X$.

Then the following happens:
\begin{enumerate}
 \item if $H_1, H_2$ are general, then $X$ has canonical singularities;
 \item if $H_1$ is general and $H_2$ passes through $p_2$ in a general way, then  $p_2$ is an ordinary  quadruple point for $B$ and $X$ has one simple elliptic singularity of degree $2$;
 \item if $H_1$ is general and $H_2$ is tangent to $H_3$ in  $p_2$, then $p_2$ is a degenerate quadruple point and  $X$ has one  cusp singularity of degree $2$;
   \item if $H_i$ passes through $p_i$ in a general direction, then  then  $p_1$ and $p_2$ are ordinary quadruple points and $X$ has two elliptic singularities of degree $2$;
  \item if $H_1$ passes through $p_1$ in a general direction and  $H_2$ is tangent to $H_3$ in  $p_2$, then $X$ has one  elliptic and one cusp singularity of degree $2$;
   \item if $H_i$  is tangent to $H_3$ at $p_i$, then $X$ has two cusp singularities of degree $2$.
\end{enumerate}
According to general principles (\eqref{eq: MHS resolution}), this should realise all degenerations in Figure \ref{fig:hodge-diagram-diagram}, if we check that the exceptional  curves in the resolution give rise to independent classes in $H^2(X)$.
This can be easily checked using the classification of cases given in \cite{FPR17a}, that we have summarized in Table \ref{tab: normal strata}.

 In case ({\em ii}) the  minimal resolution  $\bar Y$ of $X$ is the blow-up of a K3 surface. Denoting by $E_2$ the elliptic curve coming from the resolution of the singular point over $P_2$, equation  \eqref{eq: MHS resolution} reads as follows
\begin{equation}\nonumber
\begin{tikzcd}
 0= H^1(\bar Y, \IC)  \rar &
 H^1(E_2, \IC)
{}\rar&H^2( X, \IC) \rar { \sigma^*} & H^2(\bar Y, \IC)
\end{tikzcd}
\end{equation}

In this case $H^1(E_2, \IC)$  carries a pure Hodge structure of weight 1 with $h^{1,0}(E_2) = 1.$
Therefore the mixed Hodge structure of $X$ is of type  $ \HT{0,1}$, that is,  we have  realised  the degeneration of Hodge types $\HT{0,0}\to \HT{0,1}.$

If we are in case ({\em iv})  then our surface $X$ has two simple elliptic singularities and the minimal resolution  $\bar Y$ of $X$  is a rational surface.
Arguing as above, equation  \eqref{eq: MHS resolution} shows that the classes of the exceptional divisors $E_1$ and $E_2$ give independent classes in $H^2(X)$, hence we obtain a  further degeneration of Hodge types  $\HT{0,1} \to \HT{0,2}$.

If we are in case ({\em iii})  then  $X$ has one  cusp singularity of degree $2$ over $P_2$ and its minimal resolution is again the blow-up of a K3 surface.
This is in fact  a degeneration of  case ({\em ii}).
Denoting  by $E_2= \sum_{i =1}^rE_{2,i}$ the cycle of rational curve coming from the resolution of the singular point over $p_2$,
and by  $\{q_1, \ldots, q_r\}$ the set of singular points of $E_2$,
  equation  \eqref{eq: MHS resolution}  and equation \eqref{eq: MHS normalisation curve} show that we get a mixed Hodge structure of type $ \HT{1,0}$. Indeed we get
\begin{equation}\nonumber
\begin{tikzcd}
 0= H^1(\bar Y, \IC)  \rar &
 H^1(E_2, \IC)
{}\rar&H^2( X, \IC) \rar { \sigma^*} & H^2(\bar Y, \IC) \rar &  0
\end{tikzcd}
\end{equation}
To study $H^1(E_2, \IC)$ we consider the exact sequence \eqref{eq: MHS normalisation curve}
\begin{equation}\nonumber
\begin{tikzcd}
0 \rar & \IC=H^0(E_2, \IC) \rar  &  \bigoplus_i H^0(E_{2,i}, \IC)\oplus \bigoplus_i H^0(q_i, \IC)  \rar &{}  \\
\rar &  \bigoplus_i H^0(\inverse\nu(q_i), \IC)  \rar &
 H^1(E_2, \IC) \rar { \nu^*} & \bigoplus_i H^1(E_{2,i}, \IC)
\end{tikzcd}
\end{equation}
Now it is
$ \bigoplus_i H^0(E_{2,i}, \IC) \cong   \IC^r \cong  \bigoplus_i H^0(q_i, \IC) $,
 $\bigoplus_i H^0(\inverse\nu(q_i), \IC) \cong \IC^{2r}$ and
$ \bigoplus_i H^1(E_{2,i}, \IC) =0 $. Moreover $(H^1(\bar Y))^{0,0}= (H^2(\bar Y))^{0,0}=0$, hence  $(H^2(X))^{0,0}$   is isomorphic to the $(0,0)$ components of $H^1(E_2)$. Summing up,
we have obtained  the degeneration  $\HT{0,0}\to \HT{0,1} \to \HT{1,0}$.

In case ({\em v})  and case ({\em vi}) we repeat the above arguments  for the two singular points, obtaining  the degeneration $\HT{0,0}\to \HT{0,1} \to \HT{1,0} \to \HT{1,1}$ and
$\HT{0,0}\to \HT{0,1} \to \HT{1,0} \to \HT{1,1} \to \HT{2,0}$, respectively.

\begin{rem}
In Table \ref{tab: normal strata} we find  degenerations $X$ with three  elliptic points and minimal resolution  a (possibly non-minimal)  surface $\tilde X$ ruled over an elliptic curve. Therefore, the only possible exceptional elliptic cycles  are smooth elliptic cuves embedded as multi-sections in $\tilde X$. Thus the relevant part of  \eqref{eq: MHS resolution} becomes
\[
 \begin{tikzcd}
0 \rar & H^1(\tilde X, \IC)  \rar[hookrightarrow] &
\bigoplus_{i=1}^3 H^1(E_i, \IC) \rar&H^2(X, \IC) \rar & H^2(\tilde X, \IC)
\end{tikzcd},
\]
because each multisection carries, via its map to the base curve of the ruled surface the first rational cohomology. Thus the Hodge type turns out to be $\HT{0,2}$. Note that none of the elliptic points can degenerate to a cusp because of the geometry of the minimal resolution. If we want to degenerate the Hodge type further we need to degenerate $X$ to a non-normal surface.

\end{rem}
\subsubsection{Non-normal double covers of the cone}

 If we want to consider non-normal double covers of the cone then we have a non-reduced branch divisor, that is,
 \[ f = gd^2 \text{ and }  B = B_0 + 2 D,\]
 and the normalisation diagram \eqref{diagr: pushout} becomes
 \[
  \begin{tikzcd}
    &\bar X \dar{\pi}\rar[hookleftarrow]{\bar\iota} \arrow{ddl}[swap]{\text{branched over $B_0$ and vertex}} & \bar D\dar{\text{branched over $B_0\cap D$ and vertex}}[swap] {\pi}
    \\
    &X\rar[hookleftarrow]{\iota}\arrow{dl}%{\text{branched over $B$ and vertex}}
    &D
%     &D^\nu\lar[swap]{\nu}
\\
    \IP(1,1,2)
  \end{tikzcd}
  \]

If both $B_0$ and $D$ are general then the normalisation is smooth and so are the conductor curves. The possibilities are listed in Table \ref{tab: branch general}.
The first two cases yield Gorenstein non-normal I-surfaces and are described in \cite{FPR17a}. The other three cases are new and  yield   2-Gorenstein I-surfaces,
which have singular normalisation.

The list is obtained by considering the possible degrees of $B_0$ and  $D$. The genus of $D$ follows 
 by using the adjunction formula for a complete intersection curve in $\pp^3$.    $K_X$ and $K_{\bar D}$ can be computed
via the Riemann-Hurwitz formula
looking at the double covers  $\bar X \to \IP(1,1,2)$ branched over $B_0$ and the vertex,  respectively,  $\bar D \to D$ branched over $B_0\cap D$ and the vertex.

Note that a smooth  curve in $\kq_2$
passes through the vertex if its degree is odd. 
If such a curve is a component of $B$ then $X$ is not Gorenstein. 

The curve $D$ can have at most nodes as singularities. If $p$ is a node of $D$ then  $p\not \in B_0$ and
in the double cover we get a degenerate cusp locally analytically given by the equation $z^2 - u^2v^2=0$.
If $p$ is a smooth point of $D$, then one of the following occurs:
\begin{itemize}
 \item $p\not \in B_0$ and the preimage in $X$ is a normal crossing point.
 \item $D$ and $B_0$ intersect transversely at $p$ and the preimage in $X$ is a pinch point.
 \item The local intersection multiplicity $(D, B_0)_p = 2$ and $B_0$ is either smooth or has an $A_n$ singularity at $p$.
 The preimage is a degenerate cusp, usually denoted with $T_{2,q,\infty}$ for $q\geq 3$ and locally described by the equation $z^2- u^2(v^2-u^{q-2})$.

 One could view $T_{2,3,\infty}$ as a non-normal version of the elliptic or cusp singularity coming from a $[3,3]$ point, while the others are non-normal versions of the quadruple point.

 \end{itemize}
This can be checked by blowing up and analysing the cases  similarly to what is done in \cite{liu-rollenske12a} or from the point of view of log-canonical threshold  in \cite[6.5]{Corti-Kollar-Smith}.

\begin{table}
\caption{Non-normal double covers of $\IP(1,1,2)$  for general $D$ and $B_0$} \label{tab: branch general}
 \begin{tabular}{ccclcc}
  \toprule
  Hodge-Type & $\deg d$ & $ \deg g$ & $ \bar X$ & $g(\bar D)$ & $g(D)$\\
  \midrule
  $\HT{0,2}$& 2 & 6& del Pezzo of degree 1 & 2 & 0\\
  $\HT{0,2}$& 4 & 2& $\IP^2$ & 3 & 1\\
  \midrule
 $\HT{0,1}$ & 1 & 8&  K3 & 1 & 0\\
  $\HT{0,2}$& 3 & 4&  del Pezzo of degree 4 & 2 & 0\\
  $\HT{0,2}$ & 5 & 0& two copies of $\IP(1,1,2)$ & 2 + 2 & 2\\
  \bottomrule
 \end{tabular}
\end{table}
% \begin{rem}
%  \sr{Discuss a bit what happens, when the branch contains the vertex}\sr{ Or may
% \end{rem}

Equipped with this knowledge of possible singularities we will use the ideas from Remark \ref{rem: ideas}
to show that in each of the cases in Table
\ref{tab: branch general}, an appropriately chosen degeneration of the branch data will allow for all possible degenerations of the Hodge type depicted in Figure \ref{fig:hodge-diagram-diagram}.

\begin{description}
 \item[$(\deg d, \deg g) = (2,6)$] In this case \eqref{eq: MHS normalisation} gives an exact sequence \[0 \to H^1(\bar D, \IC) \to H^2 (X, \IC) \to H^2(\bar X, \IC) \to 0\]
  where $\bar X$ is a del Pezzo surface of degree one,  $D$ a rational curve and $\bar D$ is a genus 2 curve (see \cite[Proposition 4.4]{FPR17a});
 the Hodge type is determined by the curve only, because the del Pezzo surface $\bar X$ has $p_g(\bar X ) = 0 $.

 Thus we can degenerate further by choosing $B_0$ and $D$ such that they are tangent in one or two points. Each tangency will introduce a node in $\bar D$ and by \eqref{eq: MHS normalisation curve} introduce a one-dimensional piece of weight zero in the Hodge structure.
 This gives the degeneration of Hodge types $\HT{0,2}\to \HT{1,1} \to \HT{2,0}$.

To obtain the entire string $\HT{0,0}\to\HT{0,1}\to \HT{0,2}\to \HT{1,1} \to \HT{2,0}$, we consider  a  degeneration  of the branch divisor $B$.
 Take  $B = B_0 +D_1+D_2$, with $D_1$ and $D_2$ linearly equivalent but distinct. For $D_1$ and $D_2$  general we have negligible singularities, i.e.  Hodge type  $\HT{0,0}$.
 Letting $D_1$ and $D_2$ pass through a point $p_0\in B_0$, with suitable tangent conditions, we obtain a $[3,3]$-point, hence a degeneration of  type  $\HT{0,1}$.
 Letting $D_1=D_2$, we get our surface $X$.

  \item[$(\deg d, \deg g) = (4,2)$]  In this case $\bar X =\IP^2$ and  \eqref{eq: MHS normalisation} gives an exact sequence \[0 \to H^1(D, \IC) \to H^1(\bar D, \IC) \to H^2 (X, \IC) \to H^2(\IP^2, \IC) \to 0.\]
 where $D$ is an elliptic curve and $\bar D$ is a genus 3 curve  (see \cite[Proposition 4.4]{FPR17a});
 as in the previous case we can choose $D$ and $B_0$ to be smooth but  tangent in one or two points  %\rp{2 tangencies can be done but it is not completeky obvious}
  to achieve the
 degeneration of Hodge types $\HT{0,2}\to \HT{1,1} \to \HT{2,0}$.

 Note how in the case of two tangencies the Hodge structure on $H^1(\bar D, \IC)$ still has a part of weight one, which comes from  the elliptic curve and
  thus does not show up in the Hodge structure on $H^2(X, \IZ)$.

 As above, to obtain the entire string of Hodge type degenerations, we consider
 $B = B_0 +D_1+D_2$, with $D_1$ and $D_2$ linearly equivalent  passing through a point $p_0\in B_0$,  so that we obtain a $[3,3]$-point, and then
 letting $D_1=D_2$, we get our surface $X$.

  \item[$(\deg d, \deg g) = (1,8)$]
 $\bar X$ is a   singular K3 surface (it has two singularities of type $A_1$ over the vertex of the cone), $D$ is rational curve and $\bar D$ is a genus one curve.
 In this case it is not enough to make $B_0$ and $D$ tangent, as we also need to degenerate the K3 surface to get rid of  the $(2,0)$ part of its Hodge structure.

 This can be achieved by first letting $B_0$ acquire an ordinary quadruple point and then a degenerate quadruple point where two local branches are tangent.
Identifying  $\IP(1,1,2)$ with the quadric cone $\kq_2 \subset \IP^3$, we can obtain such degeneration  choosing four suitable plane sections through a point.

 This gives the degeneration $\HT{0,1}\to \HT{0,2} \to \HT{1,1}$ if $D$ remains general.
If in addition we allow $D$ to be tangent to $B_0$ then $\bar D$ will be a nodal curve of genus one, giving the missing degenerations from Figure \ref{fig:hodge-diagram-diagram}.
%\rp{I think $D$ can also be bitangent to $B_0$, at least if $B_0$ is general}

   \item[$(\deg d, \deg g) = (3,4)$]
      In this case $\bar X$ is a singular del Pezzo surface of degree four, $D$ is rational curve and $\bar D$ is a genus two curve.
      Again the Hodge type is carried by $\bar D$ and we can degenerate  $\HT{0,2}\to \HT{1,1} \to \HT{2,0}$  by letting $D$ and $B_0$ acquire one or two tangencies.

   As in the previous cases  we obtain the entire string of Hodge type degenerations,  considering
 $B = B_0 +D_1+D_2$, with $D_1$ and $D_2$ linearly equivalent  passing through a point $p_0\in B_0$, and then
 letting $D_1=D_2$.
    \item[$(\deg d, \deg g) = (5,0)$]
    This is the first case we encounter where $X$ is reducible, with $\bar D = D\sqcup D$ and \eqref{eq: MHS normalisation} becomes
     \begin{multline*}
0 \to H^1(D, \IC) \to H^1(D, \IC)^{\oplus2} \to H^2 (X, \IC) \to\\ \to  H^2(\kq_2, \IC)^{\oplus 2}\oplus H^2(D, \IC) \to H^2( D, \IC)^{\oplus 2} \to 0.
     \end{multline*}
    We see that the Hodge type is purely determined by $D$ and can produce the degeneration
    $\HT{0,2}\to \HT{1,1} \to \HT{2,0}$  by letting $D$ acquire one or two nodes.
 \end{description}

 \subsection{Two K3 surfaces}
It was observed in \cite[Example 4.7]{FPR17a} that one can obtain examples of non-normal I-surfaces as follows:

Consider in $\IP^3$ two planes $P_1$ and $P_2$ and let $B = B_1 + B_2$ be a quintic section of $P_1\cup P_2$. Let $L$ be the intersection line and $L_i\subset P_i$ its preimages. Then the double cover
\[ \begin{tikzcd}
    \bar X = X_1 \sqcup X_2 \rar{\pi} \dar[swap]{\text{branched over $B_i + L_i$}} & X \dar\\ P_1 \sqcup P_2 \rar & P_1\cup P_2\\
   \end{tikzcd}
\]
defines, for a general  choice of quintic section a 2-Gorenstein I-surface.

For a general quintic sections the curves $B_i$ are smooth and intersect the lines $L_i$ transversely, so $X_i$ is a singular K3 surface with five nodes over $L_i \cap B_i$. It is at these points that $X$ is not Gorenstein.

To compute the mixed Hodge structure we use \eqref{eq: MHS normalisation} again and get
     \[0  =  ( H^1(L_1, \IC) \oplus H^1(L_2, \IC) )\to H^2 (X, \IC) \to H^2(X_1, \IC)\oplus H^2(X_2, \IC)  \oplus H^2( L, \IC)  \]
so that $X$ has Hodge type $\HT{0,0}$ if the branch curve is general.
\footnote{One might want to compare it  with the fact that if we take two elliptic curves meeting in a node, then the Hodge structure on the first cohomology is still pure, despite the curve being singular.}

We can get all possible degenerations from Figure \ref{fig:hodge-diagram-diagram} by letting  $B_1$ and then $B_2$ independently acquire first an ordinary and then a degenerate quadruple point.

Explicitly, pick a point $p_i \in P_i$ and consider three general planes through both points. The resulting ordinary triple point does not affect the Hodge type, since it gives rise to a canonical singularity.
Now it is easy to choose  different quadrics, which have the required intersection or tangency at the $p_i$.

 \subsection{Double covers of a different cone}\label{4B}

 As proved in \cite{FPRR22} an I-surface with a unique T-singularity of type $\frac{1}{18}(1,5)$ is a complete intersection in $\IP(1,1,2,3,5)$ 
 given by equations
 \[ x_1^3 - yx_2=z^2 - f_{10}(x_1, x_2, y, u)=0\]
  (where $\deg x_i=1, \deg y=2, \deg u=3, \deg z=5$)
for a sufficiently general polynomial $f_{10}$ and we can thus consider $X$ as a double cover over a rational surface
\[ X \to S = \Proj \IC[x_1, x_2, y, u]/(x_1^3 -x_2 y)\]
branched over $B = \{ f_{10} = 0\}$.

For a general branch divisor $B$ the surface $X$ has a unique singularity, which is rational, and thus has Hodge type $\HT{0,0}$.
 We want to explore some degenerations to non-normal surfaces  by degenerating the branch locus but have to be more careful than in Section \ref{sect: cone 1} to remain slc.

%\sr{I misread Kollar / Shepherd-Barron: the index at most two stuff is true for the strictly slc singularities but not the semi-log-terminal ones. Thus we have a choice here: we either decide not to pass through the vertex or prove that is not possible. Stephen, can you easily do the latter? If not we should stick to the former for simplicity. I will write a sentence for now.  }

Thus for simplicity we will only consider the case where the non reduced part  of   the branch divisor avoids the point $(0:0:0:1) \in \IP(1,1,2,3)$, that is, we consider equations of the form
 \[ f_{10} = g_4(x_1, x_2, y, u)\cdot( u - h_3(x_1, x_2, y))^2.\]
%
% Indeed, the singular point has index three while all non-normal slc singularites have Cartier index at most two (cf. \cite[Thm. 4.24]{ksb88})
% \mf{added}
%  \st{can we make this more precise? I guess it is not true as stated} \cite[??]{KollarSMMP}, so the $\frac{1}{18}(1,5)$ point cannot degenerate to a non-normal point. In terms of the double cover this means, that the non-reduced part of the branch locus has to avoid the point $(0:0:0:1) \in \IP(1,1,2,3)$ and the only possiblilty is
% \[ f_{10} = g_4(x_1, x_2, y, u)\cdot( u - h_3(x_1, x_2, y))^2.\]
Then the homogenous coordinate ring of the non-reduced part of the branch divisor  $D$ is
\[ \IC[x_1, x_2, y, u] / (x_1^3-  x_2y , u - h_3) \isom \IC[x_1, x_2, y ] / (x_1^3 - x_2 y ),\]
which describes a smooth rational curve and  does not depend on $h_3$ at all.

The normalisation $\bar X$ is given by adjoining the ratio $w:=z/(u-h_3)$ of weight $2$ to the coordinate ring of $X$. Thus we may eliminate $z$ using $w$ and $\bar X$ is
\[ \{ x_1^3 -x_2 y = w^2 - g_4 = 0\} \subset \IP(1,1,2,3,2).\]
This is a rational surface with ample anti-canonical sheaf and for general $g_4$, $\bar X$ has two $A_1$ singularities occurring at the intersection points of $\bar X$ and $\IP(2,2)\subset \IP(1,1,2,3,2)$  and one $\frac1{18}(1,5)$ singularity, occurring at  $(0:0:0:1:0)$.

In total, by \eqref{eq: MHS normalisation} the Hodge type is completely controlled by $\bar D$, which is the induced double cover of $D$ described by using $h_3$ to eliminate $u$ from the equations for $\bar X$:
\[ \{ x_1^3 -x_2 y = w^2 - g_4( x_1, x_2, y, h_3) = 0\}\subset \IP(1,1,2,2)\]

For suitable choices of $g_4$ and $h_3$ we can arrange for $\bar D$ to be either smooth of genus 2   or to have  one or two nodes. 
These give rise to degenerate cusps on $X$ thus giving the degenerations $\HT{0,2}\to \HT{1,1} \to \HT{2,0}$.

%\begin{rem}
% One can get all degenerations from Figure \ref{fig:hodge-diagram-diagram}
% by letting $B$ acquire in two general points independently first a $[3,3]$ point and then a $[3,3]$ point with an extra tangency.\mf{ I haven't understood: we have already finished with  $\HT{2,0}$}
% \end{rem}

\subsubsection{A K3 surface and a rational surface}

We consider now a non-normal degeneration of Example \ref{4B}. Let $X$ be the complete intersection in $\IP(1,1,2,3,5)$ given by equations
\[x_2y=z^2-f_{10}(x_1,x_2,y,u)=0\]
for sufficiently general $f_{10}$. As in section \ref{4B}, $X$ is a double cover of $S\colon(x_2y=0)$ in $\IP(1,1,2,3)$.
This time, $S$ has two components $S_1\colon(y=0)$ and $S_2\colon(x_2=0)$ which are the weighted projective planes $S_1\cong\IP(1,1,3)$ and $S_2\cong\IP(1,2,3)$.
They are glued along a common weighted projective line $L:=\IP(1,3)\cong\IP^1$. The double cover $X_1$ of $S_1$ is a singular del Pezzo surface with one $\frac13(1,1)$ singularity
 over the point $(0:0:1)$ in $\IP(1,1,3)$ (see \cite{suzuki-reid} for many more details). The double cover $X_2$ of $S_2$ is a K3 surface with an $A_2$ singularity over the point $(0:0:1)$ in $\IP(1,2,3)$. 
 The double curve of $X$ is an elliptic curve $D$ which is the preimage of $L$. The two singular points are identified to give a singularity on $X$ which is analytically isomorphic to the quotient of $\{x_2y=0\}\subset\IA^3$ by the order three action
  $(x_2,y,z)\mapsto(\omega x_2,\omega^2 y,\omega^2 z)$ where $\omega$ is a primitive third root of unity.

Using \eqref{eq: MHS normalisation} again, we compute the mixed Hodge structure:
\[0  \to  H^1(D, \IC) \to H^1(D, \IC)^{\oplus2} \to H^2 (X, \IC) \to H^2(X_1, \IC)\oplus H^2(X_2, \IC)\oplus H^2(D, \IC),\]
so $X$ has Hodge type $\HT{0,1}$.

If we allow $D$ to get a node (i.e. $f_{10} $  to become tangent to $L$)
 then we get Hodge type  $\HT{1,0}$ and so we have recovered  all the types in Figure  \ref{fig:hodge-diagram-diagram}.

\newcommand{\etalchar}[1]{$^{#1}$}
\def\cprime{$'$}

 \end{document}